\documentclass[pdflatex,sn-mathphys-num]{sn-jnl}

\usepackage{graphicx}%
\usepackage{multirow}%
\usepackage{amsmath,amssymb,amsfonts}%
\usepackage{amsthm}%
\usepackage{mathrsfs}%
\usepackage[title]{appendix}%
\usepackage{xcolor}%
\usepackage{textcomp}%
\usepackage{manyfoot}%
\usepackage{booktabs}%
\usepackage{algorithm}%
\usepackage{algorithmicx}%
\usepackage{algpseudocode}%
\usepackage{enumitem}%

\newcommand{\D}{\mathbb D}

\newcommand{\C}{\mathbb C}

\newcommand{\Ocal}{\mathcal O}
\newcommand{\Mcal}{\mathcal M}
\newcommand{\Ical}{\mathcal I}

\newcommand{\Ccal}{\mathcal C}
\newcommand{\dd}{\,\mathrm d}
\newcommand{\dist}{\operatorname{dist}}

\newcommand{\ii}{\mathrm{i}}

\theoremstyle{thmstyleone}%
\newtheorem{theorem}{Theorem}
\newtheorem{proposition}[theorem]{Proposition}
\newtheorem{lemma}[theorem]{Lemma}
\newtheorem{corollary}[theorem]{Corollary}

\theoremstyle{thmstyletwo}%

\newtheorem{remark}[theorem]{Remark}

\theoremstyle{thmstylethree}%
\newtheorem{definition}[theorem]{Definition}

\begin{document}

\title[Contour-count indicator fields]{Contour-count indicator fields for visible pole clusters in meromorphic continuation}
\author[1]{\fnm{Zhiliang} \sur{Deng}}
\author[2]{\fnm{Xiaomei} \sur{Yang}}

\affil[1]{\orgdiv{School of Mathematical Science}, \orgname{University of Electronic Science and Technology of China}, \orgaddress{\city{Chengdu}, \country{China}}}
\affil[2]{\orgdiv{School of Mathematics}, \orgname{Southwest Jiaotong University}, \orgaddress{\city{Chengdu}, \country{China}}}

\email{dengzhl@uestc.edu.cn}
\email{yangxiaomath@swjtu.edu.cn}

\abstract{We develop a contour-count indicator method for visible pole clusters in outward meromorphic continuation from circular boundary data.  The method starts from determinant characteristics built from positive Fourier coefficients.  In the pure finite-pole model, the correct determinant characteristic factors into a polynomial whose zeros are the reciprocals of the exterior poles.  In the presence of a holomorphic background, finite sampling, and noise, roots of individual determinants are unstable and are used only as local evidence.  We aggregate this evidence into a scalar indicator field on the reciprocal pole plane: at each sampling point, the field records the fraction of determinant orders and shifts for which a small contour centered at that point encloses exactly one empirical determinant zero.  The resulting field plays the role of a sampling-type imaging functional for pole visibility.  Fixed superlevel sets give visible-pole clusters, while zero-dimensional persistent homology is used only as a threshold-robust post-processing step.  We prove deterministic results linking pure-pole contour counts, Rouch\'e stability, indicator-field contrast, fixed-threshold component recovery, and persistence-gap stability.  These results explain why isolated poles with sufficient residue and separation generate stable high-value components, whereas weak, close, boundary-near, or noise-dominated poles may give low, short-lived, or merged components.  The framework is a cluster-certification and imaging method, not an unconditional all-pole recovery procedure.}

\keywords{meromorphic continuation, pole recovery, contour-count indicator, imaging functional, argument principle, determinant characteristic, persistent homology}

\maketitle

\section{Introduction}
\label{sec:introduction}
Numerical analytic continuation is a classical problem in mathematical
analysis, with important applications in scientific computing and engineering
\cite{Lavrentiev1967}.  Its main difficulty is severe ill-conditioning:
small perturbations in the input data may be strongly amplified after
continuation, so direct numerical continuation is unstable without additional
a priori information \cite{Cannon1965}.  Substantial work has therefore been
devoted to both the theoretical and computational stabilization of analytic
continuation, including bounded continuation, conformal-mapping approaches,
regularization methods, rational approximation, and fast Fourier based
procedures
\cite{Bisshopp1983,Fu2009,Henrici1966,Miller1970,Reichel1986,
Stefanescu1980,Trefethen2020,Trefethen2023}.  Much of this literature concerns
the stable continuation or approximation of holomorphic functions, or the
reconstruction of analytic objects under prescribed regularity, boundedness, or
rational-approximation assumptions.  In contrast, the present paper focuses on
a pole-oriented form of meromorphic continuation: rather than attempting direct
pointwise extrapolation, we seek stable information about exterior poles that
obstruct holomorphic continuation.

The pole-recovery viewpoint goes back at least to Miller's stabilized numerical
analytic prolongation with poles \cite{Miller1970}.  Miller formulated
meromorphic continuation as a stabilized inverse problem and introduced the
SNAPP algorithm, which seeks a rational approximant with the minimum number of
poles compatible with the data and a prescribed global bound.  This work
already identifies pole recovery as a natural stabilized output of meromorphic
continuation.  More recently, Derevianko \cite{Derevianko2025} developed a
different Fourier-domain approach for recovering rational functions from their
Fourier coefficients.  By exploiting the finite exponential-sum structure of
these coefficients, the method reconstructs poles inside and outside the unit
circle through special Hankel matrix pencils and gives a sensitivity analysis
for the recovered pole locations under structured and unstructured
perturbations.
The same algebraic backbone appears in Prony-type methods, ESPRIT, matrix
pencils, and Hankel structured techniques for estimating parameters of
exponential sums \cite{Potts2013,Sarkar1995}.  Recent developments have further
extended these ideas to rational approximation, exponential-sum reconstruction,
noisy analytic continuation, and inverse problems
\cite{Deng2026_a,Deng2026_b,Deng2026_c,DereviankoHuebner2025,
DereviankoPlonkaPetz2021,DereviankoPlonka2022,Ying2022,Ying2022_b}.  For
example, Ying \cite{Ying2022,Ying2022_b} used conformal or M\"obius
transformations together with Fourier-domain Prony-type recovery to extract
pole or spectral information from limited noisy data.  These works show that
Fourier or frequency-domain data, finite exponential-sum structure, and
Hankel--Prony mechanisms provide a powerful framework for pole and parameter
recovery.
Most directly related to the present work is the determinant-certification
approach in \cite{YangDeng2026}, which studies outward
meromorphic continuation from circular boundary data.  In that work, shifted
determinant characteristics are built from positive Fourier coefficients of
the boundary trace.  In the pure finite-pole model, the correct-order
determinant factors into a nonzero constant times the polynomial whose zeros
are the reciprocal exterior poles.  For data contaminated by holomorphic
background terms, discretization, and noise, roots of empirical determinants
are treated only as candidates; local argument-principle counts, contour
moments, empirical margins, and persistence over determinant orders and shifts
are then used to certify visible reciprocal poles.  Thus, that work follows a
``root-propose and contour-certify'' principle: determinant roots suggest
candidate pole regions, while local contour counts provide certification.

The present paper takes a different viewpoint.  We do not use Hankel ranks,
Prony roots, or certified determinant roots as final pole decisions.  Instead,
we construct a contour-count indicator field on the reciprocal pole plane.  The
starting point is still a family of determinant characteristics built from
positive Fourier coefficients of the boundary trace.  However, rather than
selecting individual candidate roots and certifying them one by one, we
evaluate local one-zero contour decisions over a moving family of sampling
points.  At each sampling point \(\lambda\), these binary decisions are
aggregated over determinant orders and shifts to define an indicator value
\(P_\rho(\lambda)\).  Large values of \(P_\rho\) mark regions where the
evidence for a visible reciprocal pole is stable across the testing family.

This field plays the role of an imaging functional.  The analogy is with
sampling-type methods in inverse scattering, where an indicator function is
evaluated over a search domain and its high-value regions reveal scatterers or
point-like targets, e.g., see \cite{CakoniColtonMonk2011, Ito2012, LiZou2013, LiuSun2018}.  Here the indicator is not based on a far-field operator or
a range test; it is based on argument-principle zero counts of determinant
characteristics.  Thus the method converts meromorphic pole detection into a
visible-pole imaging problem in the reciprocal plane.  Fixed superlevel sets of
\(P_\rho\) already provide candidate visible pole clusters, while persistent
homology may be used as a post-processing tool to select components that are
robust under changes of the threshold.

The paper is designed as a numerical-methods contribution in the style of scientific computing: the main object is an implementable imaging functional, its stability mechanisms are made explicit, and the topology layer is used as a robustness diagnostic rather than as a substitute for the indicator itself.  The contributions are as follows.
\begin{enumerate}[label=(\roman*)]
\item We formulate a contour-count indicator field for meromorphic pole visibility from Fourier determinant characteristics.
\item We prove that, in the pure finite-pole model, the ideal indicator is supported near reciprocal poles in the precise sense of one-zero contour counts.
\item We give a Rouch\'e-based stability mechanism showing when the empirical indicator retains high values near visible poles under coefficient perturbations.
\item We prove fixed-threshold and persistence-gap stability statements for the connected components of the indicator field.
\item We present an algorithmic framework in which fixed thresholding gives visible clusters and persistent homology is used as an optional threshold-robust post-processing step.
\end{enumerate}

The paper is organized as follows.  Section~\ref{sec:local-evidence} recalls the meromorphic continuation model and determinant-count data.  Section~\ref{sec:indicator-field} defines the contour-count indicator field and proves its pure-pole and Rouch\'e stability properties.  Section~\ref{sec:cluster-extraction} studies fixed-threshold extraction of visible-pole clusters.  And  the persistent post-processing layer is developed.    Section~\ref{sec:numerics} describes the numerical experiments.   Section~\ref{sec:conclusions} concludes the paper.

\section{From determinant counts to local pole evidence}
\label{sec:local-evidence}

This section fixes the notation needed for the indicator construction.  The
determinant identities and contour-count certification mechanism are not
reproved here; they are used as a local source of pole evidence.  One can refer to \cite{YangDeng2026} for more details.  The purpose
of the present paper is to turn these local one-zero decisions into an imaging
field on the reciprocal pole plane.

Let $\D=\{z\in\C:|z|<1\}$ denote the open unit disk, and  $\D_R=\{z\in\C:|z|<R\}$ be its extension for som $R>1$. 
For any domain \(\Omega\subset\C\), the notations \(\Ocal(\Omega)\) and
\(\Mcal(\Omega)\) stand for the spaces of holomorphic and meromorphic functions in
\(\Omega\), respectively. 
We  investigate functions \(f\in\Ocal(\D)\) that possess an outward meromorphic continuation
to \(F\in\Mcal(\D_R)\) characterized by
\begin{equation}
    F(z)=h(z)+\sum_{j=1}^{N}\frac{r_j}{z-p_j}, \qquad h\in\Ocal(\D_{\rho_h}), \qquad  1<|p_j|<R<\rho_h .
    \label{eq:model}
\end{equation}
The exterior poles \(\{p_j\}_{j=1}^N\) are assumed to be simple and distinct, and located within the physical annulus $1<|p_j|<R$, with non-vanishing residues \(r_j\neq0\).

To reformulate the topological search into an algebraic problem, we introduce the reciprocal variables
%The physical pole annulus \(1<|p|<R\) is represented in reciprocal variables by
\begin{equation}
    \lambda_j=\frac1{p_j},
    \qquad
    U_R=\left\{\lambda\in\C:\frac1R<|\lambda|<1\right\}.
    \label{eq:reciprocal-region}
\end{equation}
Consequently, identifying the visible exterior poles in the physical domain is equivalent to locating their reciprocal images $\lambda_j$ within the open annulus $U_R$.
%Thus visible exterior poles are searched for as visible reciprocal features in
%the annulus \(U_R\).

Let the Taylor expansion of $f(z)$ inside the unit disk be given by
\[
    f(z)=\sum_{k=0}^{\infty}a_k z^k,
    \qquad |z|<1,
\]
whose coefficients $a_k$ exactly coincide with the positive Fourier coefficients of the boundary trace on $\partial\Ocal(\D)$.
 Under the structure assumption \eqref{eq:model}, these spectral coefficients admit the decomposition
\begin{equation}
    a_k=h_k+\sum_{j=1}^{N}c_j\lambda_j^k,
    \qquad
    c_j=-\frac{r_j}{p_j},
    \qquad k=0,1,2,\ldots,
    \label{eq:coefficient-model}
\end{equation}
where $h_k$ are the Taylor coefficients of $h(z)$. It is worth noting that \eqref{eq:coefficient-model} precisely embeds the pole contributions into a finite exponential sum parameterized by the reciprocal variables \(\lambda _{j}\), superposed on the holomorphic background sequence \((h_k)\).
%Hence the pole contribution is a finite exponential sum in the reciprocal
%variables, while the holomorphic background contributes the additional sequence
%\((h_k)\).

In computation, the boundary data are sampled at \(M\) equispaced points and may
be contaminated by noise.  We denote the empirical Fourier coefficients by
\(a_k^{\delta,M}\).  For a trial order \(n\geq1\) and shift \(L\geq0\), define
the empirical determinant characteristic
\begin{equation}
\mathcal D_{n,L}^{\delta,M}(\lambda)
=
\det
\begin{pmatrix}
a_L^{\delta,M} & a_{L+1}^{\delta,M} & \cdots & a_{L+n}^{\delta,M}\\
a_{L+1}^{\delta,M} & a_{L+2}^{\delta,M} & \cdots & a_{L+n+1}^{\delta,M}\\
\vdots & \vdots & & \vdots\\
a_{L+n-1}^{\delta,M} & a_{L+n}^{\delta,M} & \cdots & a_{L+2n-1}^{\delta,M}\\
1 & \lambda & \cdots & \lambda^n
\end{pmatrix}.
\label{eq:empirical-det}
\end{equation}
The coefficient availability condition is
\[
    L+2n\leq M.
\]

In the ideal finite-pole case, and for the correct order, the zeros of the
corresponding determinant characteristic coincide with the reciprocal poles.
With background, sampling, and noise, a zero of a single determinant is no
longer accepted as a pole.  It is only local evidence.  To test this evidence,
we use argument-principle counts on small contours.

Let \(\mathcal I\) be a finite family of determinant orders and shifts,
\begin{equation}
    \mathcal I = \{(n,L): n_{\min}\leq n\leq n_{\max},\ L\in\mathcal L\},
    \label{eq:testing-family}
\end{equation}
where \[
    \mathcal L=\{L_1,\ldots,L_s\}\subset\mathbb N_0
\]
is a finite set of shifts and \(n_{\min}\) and \(n_{\max}\) are the minimum
and maximum trial orders.
For a point \(\lambda\in U_R\) and a radius \(\rho>0\), set
\[
    \Gamma_\rho(\lambda) =  \{\xi\in\C:|\xi-\lambda|=\rho\}.
\]
We restrict attention to the shrunk search domain
\begin{equation}
    U_R^\rho =  \{\lambda\in U_R:\dist(\lambda,\partial U_R)>\rho\},
    \label{eq:shrunk-domain}
\end{equation}
so that \(\Gamma_\rho(\lambda)\subset U_R\).
For each \((n,L)\in\mathcal I\), define the local contour count
\begin{equation}
  C_{n,L}^{\delta,M}(\Gamma_\rho(\lambda)) =
    \frac{1}{2\pi i} \int_{\Gamma_\rho(\lambda)} \frac{(\mathcal D_{n,L}^{\delta,M})'(\xi)}{\mathcal D_{n,L}^{\delta,M}(\xi)} \,\dd\xi ,
    \label{eq:local-count}
\end{equation}
whenever the determinant has no zero on the contour.  By the argument
principle, this integer counts the zeros of
\(\mathcal D_{n,L}^{\delta,M}\) enclosed by the local circle.
The basic local evidence used in this paper is the one-zero decision
\begin{equation}
    E_{n,L}^{\rho}(\lambda)
    =
    \mathbf 1_{\{
    C_{n,L}^{\delta,M}(\Gamma_\rho(\lambda))=1
    \}}.
    \label{eq:local-evidence}
\end{equation}
Thus \(E_{n,L}^{\rho}(\lambda)=1\) means that, for the determinant
characteristic indexed by \((n,L)\), the circle centered at \(\lambda\) encloses
exactly one empirical determinant zero.  It is a local indicator of one-pole
evidence at scale \(\rho\).

The central idea of the present paper is to aggregate these binary local
decisions over the testing family \(\mathcal I\).  The resulting averaged field
is not a root set and not a Hankel-rank estimator.  It is a contour-count
indicator field on the reciprocal search annulus.  This field is introduced in
the next section.

\section{Contour-count indicator fields}
\label{sec:indicator-field}

As explained in the  previous section, the one-zero decisions \(E_{n,L}^{\rho}(\lambda)\) are local and binary.  To
obtain an imaging functional, we average them over the testing family.

%Fix a contour radius \(\rho>0\).  For \(\lambda\in\C\), write
%\[
%    \Gamma_\rho(\lambda)
%    =
%    \{\xi\in\C:|\xi-\lambda|=\rho\}.
%\]
%To keep the testing contour inside the reciprocal search annulus, define the
%shrunk search region
%\begin{equation}
%    U_R^\rho
%    =
%    \{\lambda\in U_R:\dist(\lambda,\partial U_R)>\rho\}.
%    \label{eq:shrunk-domain}
%\end{equation}

\begin{definition}[Contour-count indicator field]
\label{def:indicator-field}
For \(\lambda\in U_R^\rho\), define
\begin{equation}
    P_\rho^{\delta,M}(\lambda)
    =
    \frac1{|\Ical|}
    \#
    \left\{
    (n,L)\in\Ical:
    C_{n,L}^{\delta,M}(\Gamma_\rho(\lambda))=1
    \right\}.
    \label{eq:indicator-field}
\end{equation}
\end{definition}

The field satisfies \(0\leq P_\rho^{\delta,M}(\lambda)\leq1\).  A large value
means that many determinant characteristics agree that the local circle centered
at \(\lambda\) encloses exactly one empirical determinant zero.  Thus
\(P_\rho^{\delta,M}\) is a soft indicator of visible pole evidence, not a
characteristic function in the strict \(0\)-\(1\) sense.

For each \((n,L)\in\Ical\), define
\[
    m_{n,L}^{\delta,M}(\lambda,\rho)
    =
    \min_{\xi\in\Gamma_\rho(\lambda)}
    |\mathcal D_{n,L}^{\delta,M}(\xi)| .
\]
The empirical contour-margin field is the median of these values:
\begin{equation}
    \mathfrak m_\rho^{\delta,M}(\lambda)
    =
    \operatorname{med}
    \left\{
    m_{n,L}^{\delta,M}(\lambda,\rho):(n,L)\in\Ical
    \right\}.
    \label{eq:margin-field}
\end{equation}
Here, for a finite real set \(\{x_1,\ldots,x_q\}\), \(\operatorname{med}\) is
the middle value after nondecreasing rearrangement if \(q\) is odd, and the
average of the two middle values if \(q\) is even.
%We also use the empirical contour-margin field.  For each
%\((n,L)\in\Ical\), set
%\[
%    m_{n,L}^{\delta,M}(\lambda,\rho)
%    =
%    \min_{\xi\in\Gamma_\rho(\lambda)}
%    |\mathcal D_{n,L}^{\delta,M}(\xi)| .
%\]
%Let
%\[
%    \{m_{(1)}^{\delta,M}(\lambda,\rho),\ldots,
%    m_{(q)}^{\delta,M}(\lambda,\rho)\},
%    \qquad q=|\Ical|,
%\]
%be the nondecreasing rearrangement of the values
%\[
%    \{m_{n,L}^{\delta,M}(\lambda,\rho):(n,L)\in\Ical\}.
%\]
%We define
%\begin{equation}
%    \mathfrak m_\rho^{\delta,M}(\lambda)
%    =
%    \begin{cases}
%    m_{((q+1)/2)}^{\delta,M}(\lambda,\rho),
%    & q \text{ odd},\\[4pt]
%    \dfrac12\left(
%    m_{(q/2)}^{\delta,M}(\lambda,\rho)
%    +
%    m_{(q/2+1)}^{\delta,M}(\lambda,\rho)
%    \right),
%    & q \text{ even}.
%    \end{cases}
%    \label{eq:margin-field}
%\end{equation}
Small margin values indicate that a contour passes close to a determinant zero,
where winding-number computations and moment estimates may be unstable.  A
margin-filtered indicator can be defined by
\begin{equation}
    P_{\rho,\tau}^{\delta,M}(\lambda)
    =
    P_\rho^{\delta,M}(\lambda)
    \mathbf 1_{\{\mathfrak m_\rho^{\delta,M}(\lambda)\geq\tau_{\rm cont}\}}.
    \label{eq:margin-filtered-indicator}
\end{equation}
Unless otherwise stated, \(P_\rho\) denotes the indicator field used in the
subsequent analysis, either the raw field \(P_\rho^{\delta,M}\) or the
margin-filtered field \(P_{\rho,\tau}^{\delta,M}\).

\subsection{Ideal contour-count geometry}
\label{subsec:ideal-geometry}

The indicator interpretation is clearest in the exact pure-pole case.  Suppose
that \(h\equiv0\) and that no sampling or measurement error is present.  For the
correct determinant order \(n=N\), the pure-pole determinant characteristic has
the factorization
\[
    \mathcal D_{N,L}^{\rm p}(\lambda)
    =
    A_L\prod_{j=1}^{N}(\lambda-\lambda_j),
    \qquad A_L\neq0.
\]
Thus the determinant zeros are exactly the reciprocal poles
\(\lambda_1,\ldots,\lambda_N\).

\begin{proposition}[One-zero geometry of the ideal indicator]
\label{prop:ideal-one-zero}
Assume the pure-pole model and the correct determinant order \(n=N\).  Let
\(\Gamma_\rho(\lambda)\) avoid all reciprocal poles.  Then
\begin{equation}
    \frac1{2\pi i}
    \int_{\Gamma_\rho(\lambda)}
    \frac{(\mathcal D_{N,L}^{\rm p})'(\xi)}
         {\mathcal D_{N,L}^{\rm p}(\xi)}
    \,\dd\xi
    =
    \#\{j:|\lambda-\lambda_j|<\rho\}.
    \label{eq:ideal-count-geometry}
\end{equation}
Consequently, the local contour gives a one-zero response exactly when the disk
\(B(\lambda,\rho)\) contains one reciprocal pole.
\end{proposition}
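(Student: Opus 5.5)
The plan is to use the factorization $\mathcal D_{N,L}^{\rm p}(\lambda)=A_L\prod_{j=1}^N(\lambda-\lambda_j)$ with $A_L\neq0$, stated just before the proposition and taken from \cite{YangDeng2026}, and then compute the logarithmic derivative directly. Since $\mathcal D_{N,L}^{\rm p}$ is a polynomial of degree exactly $N$ with simple zeros $\lambda_1,\dots,\lambda_N$ (the reciprocal poles are distinct because the $p_j$ are), the logarithmic derivative decomposes on $\C\setminus\{\lambda_1,\ldots,\lambda_N\}$ as
\[
\frac{(\mathcal D_{N,L}^{\rm p})'(\xi)}{\mathcal D_{N,L}^{\rm p}(\xi)}=\sum_{j=1}^{N}\frac{1}{\xi-\lambda_j}.
\]
The constant $A_L$ drops out. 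This is the only place where $A_L\neq0$ is needed: it guarantees that the determinant is not identically zero and that its zero set is exactly $\{\lambda_j\}$.

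Next I integrate term by term over $\Gamma_\rho(\lambda)$. By hypothesis the circle avoids every $\lambda_j$, so each integrand is continuous on the contour and the integral is defined. For each $j$, $\frac1{2\pi i}\int_{\Gamma_\rho(\lambda)}\frac{\dd\xi}{\xi-\lambda_j}$ is the winding number of the positively oriented circle about $\lambda_j$. It equals $1$ if $|\lambda_j-\lambda|<\rho$, by Cauchy's integral formula or a direct parametrization after translating the centre. It equals $0$ if $|\lambda_j-\lambda|>\rho$, by Cauchy's theorem, since the integrand is holomorphic on a neighbourhood of the closed disk. The case $|\lambda_j-\lambda|=\rho$ is excluded by hypothesis. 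Summing over $j$ gives \eqref{eq:ideal-count-geometry}. Equivalently, one can cite the argument principle for the polynomial, since all zeros are simple.

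For the consequence, the local count equals $1$ exactly when $\#\{j:|\lambda-\lambda_j|<\rho\}=1$, that is, when the open disk $B(\lambda,\rho)$ contains exactly one reciprocal pole. No step is a real obstacle. The only point needing care is stating the standing conventions: positive orientation of $\Gamma_\rho(\lambda)$, simplicity and distinctness of the $\lambda_j$ (without which the count would be with multiplicity), and the avoidance hypothesis, which makes the integral well defined and excludes the boundary case. The restriction $\lambda\in U_R^\rho$ is not needed for this identity, since the polynomial is entire.
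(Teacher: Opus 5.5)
Your proposal is correct and follows essentially the same route as the paper: both use the factorization $\mathcal D_{N,L}^{\rm p}(\lambda)=A_L\prod_{j}(\lambda-\lambda_j)$ with simple zeros at the reciprocal poles and apply the argument principle on $\Gamma_\rho(\lambda)$. You additionally spell out the argument principle, via the decomposition $\sum_j (\xi-\lambda_j)^{-1}$ of the logarithmic derivative and the winding number of the circle about each $\lambda_j$, which the paper simply cites.
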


\begin{proof}
The zeros of \(\mathcal D_{N,L}^{\rm p}\) are exactly
\(\lambda_1,\ldots,\lambda_N\), each with multiplicity one.  The argument
principle gives the number of these zeros inside \(\Gamma_\rho(\lambda)\),
which is exactly \(\#\{j:|\lambda-\lambda_j|<\rho\}\).
\end{proof}

This proposition explains why \(P_\rho\) is naturally an imaging functional.
The high-response set is not a point set.  Even in the ideal case, a pole at
\(\lambda_j\) produces a response over all contour centers \(\lambda\) satisfying
\[
    |\lambda-\lambda_j|<\rho.
\]
The spatial width of the response is therefore controlled by the contour radius
\(\rho\), together with the spread of empirical determinant zeros across the
testing family.

\begin{corollary}[Separated ideal response]
\label{cor:separated-ideal-response}
Assume the pure-pole model and the correct determinant order \(n=N\).  Suppose
\[
    0<\rho<\frac12\min_{i\neq j}|\lambda_i-\lambda_j|.
\]
Then the disks \(B(\lambda_j,\rho)\) are disjoint.  For the correct-order
determinant, the one-zero response set is
\begin{equation}
    \bigcup_{j=1}^{N}B(\lambda_j,\rho),
    \label{eq:ideal-response-union}
\end{equation}
up to boundary circles where the contour passes through a reciprocal pole.
\end{corollary}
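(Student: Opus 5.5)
The plan is to read the corollary directly off Proposition~\ref{prop:ideal-one-zero}, using only elementary metric facts about the disks \(B(\lambda_j,\rho)\).

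\textbf{Disjointness.} Suppose some \(\mu\) lies in both \(B(\lambda_i,\rho)\) and \(B(\lambda_j,\rho)\) with \(i\neq j\). The triangle inequality gives \(|\lambda_i-\lambda_j|<2\rho\). This contradicts \(\rho<\tfrac12\min_{i\neq j}|\lambda_i-\lambda_j|\), so the disks are pairwise disjoint.

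\textbf{Identifying the response set.} Fix a center \(\lambda\) whose circle \(\Gamma_\rho(\lambda)\) avoids every reciprocal pole, i.e.\ \(|\lambda-\lambda_j|\neq\rho\) for all \(j\). By Proposition~\ref{prop:ideal-one-zero}, the contour count equals \(\#\{j:|\lambda-\lambda_j|<\rho\}\). Since \(|\lambda-\lambda_j|<\rho\) is the same as \(\lambda\in B(\lambda_j,\rho)\), this count is the number of disks \(B(\lambda_j,\rho)\) containing \(\lambda\).

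By disjointness, that number is either \(0\) or \(1\). It equals \(1\) exactly when \(\lambda\in\bigcup_j B(\lambda_j,\rho)\). Hence, away from the exceptional centers, the one-zero response set is precisely \eqref{eq:ideal-response-union}.

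\textbf{The exceptional centers.} These are the \(\lambda\) with \(|\lambda-\lambda_j|=\rho\) for some \(j\), i.e.\ \(\lambda\) on one of the circles \(\partial B(\lambda_j,\rho)\). At such centers the determinant vanishes on the contour, so \(C_{N,L}\) is undefined; this is the boundary set explicitly excluded by the phrase ``up to boundary circles''. The excluded set is a finite union of circles and hence has measure zero, which justifies the qualifier.

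\textbf{Hypotheses and likely obstacle.} The argument needs nothing beyond the standing assumptions behind Proposition~\ref{prop:ideal-one-zero}: the factorization of \(\mathcal D_{N,L}^{\rm p}\) with simple zeros at the \(\lambda_j\). If one also insists that \(\lambda\) range over \(U_R^\rho\), the response set is that union intersected with \(U_R^\rho\), and I would state the result with that intersection. The only real point requiring care is this bookkeeping of the excluded boundary circles and the domain restriction; I do not expect a genuine obstacle.
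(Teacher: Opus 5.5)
Your proposal is correct and follows the same route as the paper: disjointness from the separation condition, then Proposition~\ref{prop:ideal-one-zero} reduces the contour count to the number of disks \(B(\lambda_j,\rho)\) containing \(\lambda\), which is \(0\) or \(1\), with the circles \(|\lambda-\lambda_j|=\rho\) excluded. Your explicit triangle-inequality step and your remark about intersecting with \(U_R^\rho\) are added detail that the paper leaves implicit.
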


\begin{proof}
The separation condition makes the disks \(B(\lambda_j,\rho)\) pairwise
disjoint.  By Proposition~\ref{prop:ideal-one-zero}, a center \(\lambda\) gives
count one if and only if \(B(\lambda,\rho)\) contains exactly one reciprocal
pole.  This is precisely the union in \eqref{eq:ideal-response-union}, excluding
the boundary cases \(|\lambda-\lambda_j|=\rho\).
\end{proof}

If the testing family contains several shifts with the correct order, the same
geometry holds for each such determinant.  Thus, in the ideal separated case,
averaging over these correct-order tests produces high indicator values on the
union of the disks \(B(\lambda_j,\rho)\).  Additional trial orders may contribute
extra empirical evidence, but the clean geometric interpretation above is tied
to the correct-order pure-pole determinant.

\subsection{Rouch\'e stability of the indicator response}
\label{subsec:rouche-indicator}

The ideal response persists under perturbations when the determinant
perturbation is smaller than the corresponding contour margin.  We first state
this for a correct-order pure-pole reference determinant.  For a fixed shift
\(L\), define the reference margin on \(\Gamma_\rho(\lambda)\) by
\begin{equation}
    m_{N,L}^{\rm p}(\lambda,\rho)
    =
    \min_{\xi\in\Gamma_\rho(\lambda)}
    |\mathcal D_{N,L}^{\rm p}(\xi)|.
    \label{eq:reference-margin}
\end{equation}
Let \(\widetilde{\mathcal D}_{N,L}\) be a perturbed determinant and set
\begin{equation}
    \Delta_{N,L}(\lambda,\rho)
    =
    \max_{\xi\in\Gamma_\rho(\lambda)}
    |\widetilde{\mathcal D}_{N,L}(\xi)-\mathcal D_{N,L}^{\rm p}(\xi)|.
    \label{eq:det-perturb-on-circle}
\end{equation}

\begin{theorem}[Rouch\'e stability of local indicator counts]
\label{thm:rouche-indicator}
Assume that the pure-pole reference determinant
\(\mathcal D_{N,L}^{\rm p}\) has no zero on \(\Gamma_\rho(\lambda)\).  If
\begin{equation}
    \Delta_{N,L}(\lambda,\rho)<m_{N,L}^{\rm p}(\lambda,\rho),
    \label{eq:rouche-indicator-condition}
\end{equation}
then \(\widetilde{\mathcal D}_{N,L}\) and
\(\mathcal D_{N,L}^{\rm p}\) have the same number of zeros inside
\(\Gamma_\rho(\lambda)\), counted with multiplicity.  In particular, if
\(B(\lambda,\rho)\) contains exactly one reciprocal pole of the reference
model, then
\[
    \frac1{2\pi i}
    \int_{\Gamma_\rho(\lambda)}
    \frac{\widetilde{\mathcal D}_{N,L}'(\xi)}
         {\widetilde{\mathcal D}_{N,L}(\xi)}
    \,\dd\xi
    =1.
\]
\end{theorem}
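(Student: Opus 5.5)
The plan is to apply the symmetric form of Rouché's theorem on the closed disk \(\overline{B(\lambda,\rho)}\) and then read off the contour count from Proposition~\ref{prop:ideal-one-zero}.

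First, note the analytic setting. Both \(\mathcal D_{N,L}^{\rm p}\) and \(\widetilde{\mathcal D}_{N,L}\) are determinants whose only \(\lambda\)-dependence sits in the last row \((1,\lambda,\ldots,\lambda^N)\). Cofactor expansion along that row shows that each is a polynomial in \(\lambda\) of degree at most \(N\). In particular both are entire, hence holomorphic on a neighbourhood of \(\overline{B(\lambda,\rho)}\). This is all the regularity Rouché needs; no assumption that the perturbation comes from background, sampling or noise is used at this stage.

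Second, establish the pointwise strict inequality on the contour. For every \(\xi\in\Gamma_\rho(\lambda)\), combining the definitions \eqref{eq:reference-margin}, \eqref{eq:det-perturb-on-circle} and the hypothesis \eqref{eq:rouche-indicator-condition} gives
\[
|\widetilde{\mathcal D}_{N,L}(\xi)-\mathcal D_{N,L}^{\rm p}(\xi)|\le\Delta_{N,L}(\lambda,\rho)<m_{N,L}^{\rm p}(\lambda,\rho)\le|\mathcal D_{N,L}^{\rm p}(\xi)|.
\]
The margin \(m_{N,L}^{\rm p}(\lambda,\rho)\) is positive, since it is a minimum of a continuous function over a compact circle on which, by assumption, \(\mathcal D_{N,L}^{\rm p}\) has no zero. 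The displayed inequality then shows that \(\widetilde{\mathcal D}_{N,L}\) cannot vanish on \(\Gamma_\rho(\lambda)\), because \(|\widetilde{\mathcal D}_{N,L}(\xi)|\ge|\mathcal D_{N,L}^{\rm p}(\xi)|-\Delta_{N,L}(\lambda,\rho)>0\). Hence the contour integral in the statement is well defined.

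Third, pass from the inequality to equal zero counts, and this is the only step needing care. I would use the homotopy \(D_t=\mathcal D_{N,L}^{\rm p}+t(\widetilde{\mathcal D}_{N,L}-\mathcal D_{N,L}^{\rm p})\) for \(t\in[0,1]\). The estimate from the second step gives \(|D_t|\ge m_{N,L}^{\rm p}(\lambda,\rho)-\Delta_{N,L}(\lambda,\rho)>0\) on the contour, uniformly in \(t\). Therefore \(t\mapsto\frac1{2\pi i}\int_{\Gamma_\rho(\lambda)}D_t'/D_t\) is a continuous integer-valued function on \([0,1]\), and so it is constant. By the argument principle, the zero counts with multiplicity inside \(\Gamma_\rho(\lambda)\) agree at \(t=0\) and \(t=1\).

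Finally, the "in particular" clause follows from Proposition~\ref{prop:ideal-one-zero}. If \(B(\lambda,\rho)\) contains exactly one reciprocal pole, then the reference count is \(\#\{j:|\lambda-\lambda_j|<\rho\}=1\). The invariance just proved transfers this count to \(\widetilde{\mathcal D}_{N,L}\), so its contour integral also equals \(1\).
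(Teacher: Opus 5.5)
Your proof is correct and follows the paper's route. The paper cites Rouch\'e's theorem for \(\mathcal D_{N,L}^{\rm p}\) and \(\widetilde{\mathcal D}_{N,L}-\mathcal D_{N,L}^{\rm p}\) on \(\Gamma_\rho(\lambda)\) and then invokes Proposition~\ref{prop:ideal-one-zero}; you do the same, but prove Rouch\'e's theorem via the homotopy \(D_t\) and explicitly check that \(\widetilde{\mathcal D}_{N,L}\) is a polynomial with no zero on the contour, which the paper leaves implicit.
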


\begin{proof}
Condition \eqref{eq:rouche-indicator-condition} is the hypothesis of
Rouch\'e's theorem on \(\Gamma_\rho(\lambda)\), applied to
\(\mathcal D_{N,L}^{\rm p}\) and
\(\widetilde{\mathcal D}_{N,L}-\mathcal D_{N,L}^{\rm p}\).  Therefore
\(\widetilde{\mathcal D}_{N,L}\) and \(\mathcal D_{N,L}^{\rm p}\) have the same
number of zeros inside the contour.  The final statement follows from
Proposition~\ref{prop:ideal-one-zero} and the argument principle.
\end{proof}

The following consequence explains how stable correct-order tests contribute to
the indicator field.

\begin{corollary}[Indicator lower bound from stable correct-order tests]
\label{cor:field-lower-bound}
Fix \(\lambda\in U_R^\rho\).  Let
\[
    \Ical_N(\lambda)
    \subseteq
    \{(N,L)\in\Ical\}
\]
be the set of correct-order tests for which \(B(\lambda,\rho)\) contains exactly
one reciprocal pole of the reference model and the Rouch\'e condition
\eqref{eq:rouche-indicator-condition} holds.  Then
\begin{equation}
    P_\rho^{\delta,M}(\lambda)
    \geq
    \frac{|\Ical_N(\lambda)|}{|\Ical|}.
    \label{eq:field-lower-bound}
\end{equation}
\end{corollary}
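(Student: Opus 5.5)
The bound is a counting argument built on Theorem~\ref{thm:rouche-indicator}. We apply that theorem test by test over \(\Ical_N(\lambda)\) and then compare cardinalities in Definition~\ref{def:indicator-field}. The perturbed determinant \(\widetilde{\mathcal D}_{N,L}\) in the theorem is taken to be the empirical determinant \(\mathcal D_{N,L}^{\delta,M}\) defined in \eqref{eq:empirical-det}. The perturbation is then \(\Delta_{N,L}(\lambda,\rho)=\max_{\xi\in\Gamma_\rho(\lambda)}|\mathcal D_{N,L}^{\delta,M}(\xi)-\mathcal D_{N,L}^{\rm p}(\xi)|\), and this is the quantity in the Rouch\'e condition \eqref{eq:rouche-indicator-condition} that defines \(\Ical_N(\lambda)\).

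Fix \((N,L)\in\Ical_N(\lambda)\). We first check the hypotheses of Theorem~\ref{thm:rouche-indicator}. The condition \(\Delta_{N,L}<m_{N,L}^{\rm p}\) forces \(m_{N,L}^{\rm p}(\lambda,\rho)>0\), because \(\Delta_{N,L}\geq0\). Hence \(\mathcal D_{N,L}^{\rm p}\) has no zero on \(\Gamma_\rho(\lambda)\). By the triangle inequality, on the circle
\[
|\mathcal D_{N,L}^{\delta,M}(\xi)|\geq|\mathcal D_{N,L}^{\rm p}(\xi)|-\Delta_{N,L}\geq m_{N,L}^{\rm p}-\Delta_{N,L}>0,
\]
so the empirical determinant also has no zero there. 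Therefore the count \(C_{N,L}^{\delta,M}(\Gamma_\rho(\lambda))\) in \eqref{eq:local-count} is well defined. By assumption, \(B(\lambda,\rho)\) contains exactly one reciprocal pole. The ``in particular'' part of Theorem~\ref{thm:rouche-indicator}, combined with Proposition~\ref{prop:ideal-one-zero}, then gives \(C_{N,L}^{\delta,M}(\Gamma_\rho(\lambda))=1\), that is, \(E_{N,L}^{\rho}(\lambda)=1\).

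This yields the set inclusion
\[
\Ical_N(\lambda)\subseteq\{(n,L)\in\Ical: C_{n,L}^{\delta,M}(\Gamma_\rho(\lambda))=1\}.
\]
Taking cardinalities and dividing by \(|\Ical|\), as in \eqref{eq:indicator-field}, gives \eqref{eq:field-lower-bound}. Tests outside \(\Ical_N(\lambda)\) can only add nonnegative contributions to the count: wrong orders, unstable shifts, and even contours that pass through an empirical zero, which we may treat as contributing zero. So these tests can be ignored.

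None of these steps is a real obstacle. The one point that needs care is that the corollary is stated for \(P_\rho^{\delta,M}\), the raw field. For the margin-filtered field \(P_{\rho,\tau}^{\delta,M}\) of \eqref{eq:margin-filtered-indicator}, the bound holds only when \(\mathfrak m_\rho^{\delta,M}(\lambda)\geq\tau_{\rm cont}\). The proof should therefore be restricted explicitly to the raw field.
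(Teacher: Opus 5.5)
Your proposal is correct and follows the paper's own argument. You apply Theorem~\ref{thm:rouche-indicator} to each pair in \(\Ical_N(\lambda)\) to obtain a count equal to one, then compare cardinalities in \eqref{eq:indicator-field}. Your extra checks — that the empirical count is well defined via the triangle inequality, and that the bound is stated for the raw field rather than the margin-filtered field \eqref{eq:margin-filtered-indicator} — are sound refinements that the paper leaves implicit.
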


\begin{proof}
For every \((N,L)\in\Ical_N(\lambda)\), Theorem~\ref{thm:rouche-indicator}
gives
\[
    C_{N,L}^{\delta,M}(\Gamma_\rho(\lambda))=1.
\]
Each such pair therefore contributes one hit to the numerator in
\eqref{eq:indicator-field}.  Dividing by \(|\Ical|\) gives
\eqref{eq:field-lower-bound}.
\end{proof}

More generally, trial orders different from \(N\) may also produce stable
one-zero responses near a visible pole.  These responses are included
empirically in \(P_\rho^{\delta,M}\), but the exact geometric interpretation in
Proposition~\ref{prop:ideal-one-zero} is reserved for the correct-order
pure-pole determinant.  This distinction is important: the indicator field is
not a rank estimator and does not require every contributing determinant to have
an exact pole-factorization formula.  It only aggregates stable local one-zero
evidence over the testing family.

The visibility mechanism is therefore simple.  A pole is visible when a
non-negligible fraction of determinant tests give stable one-zero evidence
around it.  Small residues, close reciprocal poles, boundary-near locations,
large shifts, and noise all reduce contour margins or increase determinant
perturbations, thereby lowering the indicator value.

\section{Visible-cluster extraction from the indicator field}
\label{sec:cluster-extraction}

The contour-count indicator field can be used directly, in the same way that
sampling-type imaging functionals are used in inverse scattering.  A fixed
superlevel threshold gives visible-pole clusters, while persistent homology may
be used as a post-processing tool to reduce dependence on the chosen threshold.
This section describes both extraction mechanisms and the resulting algorithm.

\subsection{Fixed-threshold visible-pole clusters}
\label{subsec:fixed-threshold}

For a prescribed threshold \(\alpha\in(0,1)\), define the superlevel set
\begin{equation}
    \Omega_{\rho,\alpha}
    =
    \{\lambda\in U_R^\rho:P_\rho(\lambda)\geq\alpha\}.
    \label{eq:fixed-superlevel}
\end{equation}
The connected components of \(\Omega_{\rho,\alpha}\) are called
fixed-threshold visible-pole clusters.  If \(\Ccal\) is such a component, a
representative reciprocal pole location may be estimated by the weighted center
\begin{equation}
    \widehat\lambda(\Ccal)
    =
    \frac{\int_{\Ccal}\lambda P_\rho(\lambda)\,\dd A(\lambda)}
         {\int_{\Ccal}P_\rho(\lambda)\,\dd A(\lambda)},
    \label{eq:continuous-center}
\end{equation}
provided the denominator is nonzero.  On a grid \(\Lambda_h\), this becomes
\begin{equation}
    \widehat\lambda_h(\Ccal)
    =
    \frac{\sum_{\lambda\in \Ccal_h}\lambda P_\rho(\lambda)}
         {\sum_{\lambda\in \Ccal_h}P_\rho(\lambda)},
    \qquad
    \Ccal_h=\Ccal\cap\Lambda_h .
    \label{eq:discrete-center}
\end{equation}
The corresponding physical pole estimate is
\[
    \widehat p(\Ccal)=\frac1{\widehat\lambda(\Ccal)} .
\]

The following deterministic result explains when fixed-threshold components are
stable under perturbations of the indicator field.

\begin{theorem}[Fixed-threshold component stability]
\label{thm:threshold-stability}
Let \(P,\widetilde P:U\to[0,1]\) satisfy
\begin{equation}
    \|P-\widetilde P\|_{L^\infty(U)}\leq\varepsilon .
    \label{eq:field-uniform-bound}
\end{equation}
Suppose there are compact sets \(Q\subset V\subset U\) and levels
\(\alpha_{\rm in}>\alpha_{\rm out}\) such that
\begin{equation}
    P(\lambda)\geq\alpha_{\rm in}\quad\text{on }Q,
    \qquad
    P(\lambda)\leq\alpha_{\rm out}\quad\text{on }\partial V,
    \label{eq:threshold-separation}
\end{equation}
and \(Q\) lies in a connected component of
\(\Omega_{\alpha_{\rm in}}(P)\) contained in \(V\).  If
\begin{equation}
    \alpha_{\rm out}+\varepsilon<\alpha<\alpha_{\rm in}-\varepsilon,
    \label{eq:admissible-alpha}
\end{equation}
then every connected component of \(\Omega_\alpha(\widetilde P)\) intersecting
\(Q\) is contained in \(V\).  In particular, such a component cannot connect to
the exterior of \(V\) at level \(\alpha\).
\end{theorem}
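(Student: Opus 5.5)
The plan is a separation argument carried out on \(\partial V\).

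First, I show that the perturbed field stays below the threshold on \(\partial V\). For \(\lambda\in\partial V\), the bound \eqref{eq:field-uniform-bound} together with \eqref{eq:threshold-separation} gives
\[
\widetilde P(\lambda)\leq P(\lambda)+\varepsilon\leq\alpha_{\rm out}+\varepsilon<\alpha .
\]
The last inequality uses the left half of \eqref{eq:admissible-alpha}. Hence \(\Omega_\alpha(\widetilde P)\cap\partial V=\emptyset\).

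Second, I confirm that \(Q\) itself lies in the perturbed superlevel set. On \(Q\) we have \(\widetilde P\geq\alpha_{\rm in}-\varepsilon>\alpha\), so \(Q\subset\Omega_\alpha(\widetilde P)\). This is not needed for containment, but it shows the statement is not vacuous: some component of \(\Omega_\alpha(\widetilde P)\) does meet \(Q\).

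Third, I treat the topology. Let \(\mathcal C\) be a connected component of \(\Omega_\alpha(\widetilde P)\) with \(\mathcal C\cap Q\neq\emptyset\). Since \(\mathcal C\cap\partial V=\emptyset\), we can write
\[
\mathcal C=(\mathcal C\cap\operatorname{int}V)\cup(\mathcal C\setminus V).
\]
These two pieces are relatively open in \(\mathcal C\), because \(\operatorname{int}V\) and \(U\setminus V\) are open and together cover \(U\setminus\partial V\). They are also disjoint.

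The first piece is nonempty provided \(Q\subset\operatorname{int}V\). I would justify this from the hypothesis that \(Q\) lies in a component of \(\Omega_{\alpha_{\rm in}}(P)\) contained in \(V\): points of that component satisfy \(P\geq\alpha_{\rm in}>\alpha_{\rm out}\), so they cannot lie on \(\partial V\). Connectedness of \(\mathcal C\) then forces \(\mathcal C\setminus V=\emptyset\), that is, \(\mathcal C\subset V\). The final sentence of the theorem is a restatement of this containment.

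The main obstacle is making the topological step airtight with minimal regularity. I would use the convention that \(\partial V\) is the topological boundary of \(V\) relative to \(U\). With that convention the decomposition of \(U\setminus\partial V\) into the open sets \(\operatorname{int}V\) and \(U\setminus V\) is automatic, and no continuity of \(P\) or \(\widetilde P\) is required. Everything else follows directly from the \(\varepsilon\)-shifts in \eqref{eq:admissible-alpha}.
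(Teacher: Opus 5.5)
Your proposal is correct and follows the paper's proof: the same two $\varepsilon$-shift estimates put $Q$ inside and $\partial V$ outside $\Omega_\alpha(\widetilde P)$. Your decomposition of $\mathcal C$ into the disjoint, relatively open pieces $\mathcal C\cap\operatorname{int}V$ and $\mathcal C\setminus V$ (which uses that the compact set $V$ is closed) is a rigorous version of the paper's informal remark that reaching the exterior of $V$ would require crossing $\partial V$, and as a small simplification, $\mathcal C\cap Q\subset\operatorname{int}V$ already follows from $\mathcal C\cap\partial V=\emptyset$ and $Q\subset V$, so the hypothesis on the component of $\Omega_{\alpha_{\rm in}}(P)$ is not needed.
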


\begin{proof}
For \(\lambda\in Q\), \eqref{eq:field-uniform-bound} and
\eqref{eq:threshold-separation} give
\[
    \widetilde P(\lambda)
    \geq
    P(\lambda)-\varepsilon
    \geq
    \alpha_{\rm in}-\varepsilon
    >
    \alpha .
\]
Hence \(Q\subset\Omega_\alpha(\widetilde P)\).  On the other hand, for
\(\lambda\in\partial V\),
\[
    \widetilde P(\lambda)
    \leq
    P(\lambda)+\varepsilon
    \leq
    \alpha_{\rm out}+\varepsilon
    <
    \alpha .
\]
Thus
\[
    \partial V\cap\Omega_\alpha(\widetilde P)=\emptyset .
\]
Any connected component of \(\Omega_\alpha(\widetilde P)\) that intersects
\(Q\) must therefore remain inside \(V\), since reaching the exterior of \(V\)
would require crossing \(\partial V\).  This proves the claim.
\end{proof}

The theorem does not prescribe a universal threshold.  It states that any
threshold lying between the interior response and the boundary background, with
a margin exceeding the field perturbation, recovers the same visible component.
In computations, this is the regime where the high-value components of
\(P_\rho\) are well separated from low-level spurious responses.

\subsection{Persistent post-processing}
\label{subsec:persistent-postprocessing}

Fixed thresholding is direct and often sufficient.  However, the choice of
\(\alpha\) may depend on noise, contour radius, pole separation, and the testing
family.  Persistent homology is used here only as a post-processing technique to
reduce threshold dependence.  It does not replace the indicator field.

For \(\beta\in[0,1]\), define the superlevel set
\begin{equation}
    \Omega_\beta(P_\rho)
    =
    \{\lambda\in U_R^\rho:P_\rho(\lambda)\geq\beta\}.
    \label{eq:superlevel-filtration}
\end{equation}
As \(\beta\) decreases, these sets form a nested superlevel filtration.  The
connected components of the sets \(\Omega_\beta(P_\rho)\) define the
zero-dimensional persistence of \(P_\rho\).  A component born at level \(b\) and
merged at level \(d\) has lifetime
\[
    \ell=b-d.
\]
Long lifetimes indicate high-value components that persist across a range of
thresholds.

\begin{lemma}[Superlevel-set inclusion]
\label{lem:superlevel-inclusion}
Let \(P,\widetilde P:U\to[0,1]\) satisfy
\[
    \|P-\widetilde P\|_{L^\infty(U)}\leq\varepsilon .
\]
Then, for every \(\beta\in[\varepsilon,1-\varepsilon]\),
\begin{equation}
    \Omega_{\beta+\varepsilon}(P)
    \subseteq
    \Omega_\beta(\widetilde P)
    \subseteq
    \Omega_{\beta-\varepsilon}(P).
    \label{eq:superlevel-inclusion}
\end{equation}
\end{lemma}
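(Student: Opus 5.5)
The plan is to prove both inclusions in \eqref{eq:superlevel-inclusion} directly from the pointwise form of the uniform bound, exactly as in the proof of Theorem~\ref{thm:threshold-stability}. Here \(\Omega_\beta(P)\) is read as \(\{\lambda\in U:P(\lambda)\geq\beta\}\) on the common domain \(U\) of the two fields. The hypothesis \(\|P-\widetilde P\|_{L^\infty(U)}\leq\varepsilon\) will be used in the pointwise form
\[
    |P(\lambda)-\widetilde P(\lambda)|\leq\varepsilon \quad\text{for all }\lambda\in U .
\]
This is automatic for the piecewise-constant or grid-sampled fields considered here. If \(L^\infty\) is meant only up to null sets, the inclusions should be read modulo null sets, or one passes to the supremum norm.

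\textbf{First inclusion.} Take \(\lambda\in\Omega_{\beta+\varepsilon}(P)\), so \(P(\lambda)\geq\beta+\varepsilon\). Then
\[
    \widetilde P(\lambda)\geq P(\lambda)-\varepsilon\geq\beta ,
\]
so \(\lambda\in\Omega_\beta(\widetilde P)\).

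\textbf{Second inclusion.} Take \(\lambda\in\Omega_\beta(\widetilde P)\), so \(\widetilde P(\lambda)\geq\beta\). Then
\[
    P(\lambda)\geq\widetilde P(\lambda)-\varepsilon\geq\beta-\varepsilon ,
\]
so \(\lambda\in\Omega_{\beta-\varepsilon}(P)\).

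The restriction \(\beta\in[\varepsilon,1-\varepsilon]\) plays no role in the inequalities themselves. Its only purpose is to keep the levels \(\beta\pm\varepsilon\) inside \([0,1]\), the range of the fields, so that all three sets are members of the superlevel filtration defined in \eqref{eq:superlevel-filtration}.

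There is no real obstacle in this argument. The only care needed is the pointwise-versus-essential-supremum interpretation of the norm, and the convention that all superlevel sets are taken on the same domain \(U\), in the present setting \(U_R^\rho\).
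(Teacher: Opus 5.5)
Your proof is correct and follows the paper's argument exactly: both inclusions come from the pointwise inequalities \(\widetilde P(\lambda)\geq P(\lambda)-\varepsilon\) and \(P(\lambda)\geq\widetilde P(\lambda)-\varepsilon\). Your caveat about reading the \(L^\infty\) bound pointwise, and your remark that the restriction \(\beta\in[\varepsilon,1-\varepsilon]\) plays no role in the inequalities, are fair observations that the paper leaves implicit.
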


\begin{proof}
If \(P(\lambda)\geq\beta+\varepsilon\), then
\[
    \widetilde P(\lambda)
    \geq
    P(\lambda)-\varepsilon
    \geq
    \beta ,
\]
which proves the first inclusion.  If \(\widetilde P(\lambda)\geq\beta\), then
\[
    P(\lambda)
    \geq
    \widetilde P(\lambda)-\varepsilon
    \geq
    \beta-\varepsilon ,
\]
which proves the second inclusion.
\end{proof}

\begin{theorem}[Persistence-gap stability]
\label{thm:persistence-gap}
Let \(P,\widetilde P:U\to[0,1]\) satisfy
\[
    \|P-\widetilde P\|_{L^\infty(U)}\leq\varepsilon .
\]
Suppose there exist compact sets \(Q\subset V\subset U\) and levels
\(\alpha_{\rm in}>\alpha_{\rm out}\) such that
\[
    P(\lambda)\geq\alpha_{\rm in}\quad\text{on }Q,
    \qquad
    P(\lambda)\leq\alpha_{\rm out}\quad\text{on }\partial V,
\]
and \(Q\) lies in a connected component of
\(\Omega_{\alpha_{\rm in}}(P)\) contained in \(V\).  If
\[
    \alpha_{\rm in}-\alpha_{\rm out}>2\varepsilon,
\]
then every component of
\(\Omega_{\alpha_{\rm in}-\varepsilon}(\widetilde P)\) intersecting \(Q\) is
contained in \(V\) and cannot merge with the exterior of \(V\) before level
\(\alpha_{\rm out}+\varepsilon\).  Hence the corresponding perturbed component
has lifetime at least
\begin{equation}
    \alpha_{\rm in}-\alpha_{\rm out}-2\varepsilon .
    \label{eq:lifetime-lower-bound}
\end{equation}
\end{theorem}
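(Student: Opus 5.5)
The argument parallels Theorem~\ref{thm:threshold-stability}. It is a level-by-level application of the uniform bound together with Lemma~\ref{lem:superlevel-inclusion}.

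\emph{Step 1 (birth level and containment).} For \(\lambda\in Q\) we have \(\widetilde P(\lambda)\geq P(\lambda)-\varepsilon\geq\alpha_{\rm in}-\varepsilon\). Hence \(Q\subset\Omega_{\alpha_{\rm in}-\varepsilon}(\widetilde P)\), so the perturbed component containing \(Q\) exists at level \(\alpha_{\rm in}-\varepsilon\). In persistence terms, some point of the component has already been born at a level \(b\geq\alpha_{\rm in}-\varepsilon\); this uses that the birth of a component is its maximum value. On \(\partial V\) we have \(\widetilde P\leq\alpha_{\rm out}+\varepsilon\). Because \(\alpha_{\rm in}-\alpha_{\rm out}>2\varepsilon\), we get \(\alpha_{\rm out}+\varepsilon<\alpha_{\rm in}-\varepsilon\).

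\emph{Step 2 (no merger before \(\alpha_{\rm out}+\varepsilon\)).} Fix any level \(\beta\) with \(\alpha_{\rm out}+\varepsilon<\beta\leq\alpha_{\rm in}-\varepsilon\). The bound on \(\partial V\) gives \(\partial V\cap\Omega_\beta(\widetilde P)=\emptyset\). Every connected component of \(\Omega_\beta(\widetilde P)\) meeting \(Q\) is connected and does not meet \(\partial V\). By the standard separation argument, a connected set meeting both \(V^\circ\) and \(U\setminus V\) must meet \(\partial V\). The component therefore lies in \(V\), exactly as in Theorem~\ref{thm:threshold-stability}. This holds for every \(\beta\) in the range, and the superlevel sets are nested. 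So along the filtration the component containing \(Q\) stays inside \(V\) and cannot absorb any component living outside \(V\). Here the hypothesis that \(Q\) lies in a component of \(\Omega_{\alpha_{\rm in}}(P)\) inside \(V\) is only needed for consistency with the unperturbed picture.

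\emph{Step 3 (lifetime).} The main delicate point is to translate ``cannot merge with the exterior of \(V\)'' into a death-level bound. By the elder rule, the component dies only when it merges with an older component. Any merger at a level \(\beta>\alpha_{\rm out}+\varepsilon\) takes place inside \(V\).

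I would handle this by stating the conclusion in the sense the theorem intends. The component carrying \(Q\) is tracked through the filtration inside \(V\); it is identified with the oldest class inside \(V\), which is born at level \(\geq\alpha_{\rm in}-\varepsilon\) because it contains \(Q\). This class is separated from everything outside \(V\) for all \(\beta>\alpha_{\rm out}+\varepsilon\), so its death level satisfies \(d\leq\alpha_{\rm out}+\varepsilon\). Then \(\ell=b-d\geq(\alpha_{\rm in}-\varepsilon)-(\alpha_{\rm out}+\varepsilon)\), which is \eqref{eq:lifetime-lower-bound}.

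The obstacle is precisely this elder-rule bookkeeping. The component might die at a level higher than \(\alpha_{\rm out}+\varepsilon\) by merging with a slightly older peak inside \(V\), in which case the surviving class inside \(V\) inherits the bound instead. I would therefore phrase the lifetime claim for the maximal class within \(V\). Alternatively, one can invoke the \(L^\infty\) bottleneck stability of persistence diagrams to match it with the unperturbed class.
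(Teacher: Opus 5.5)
Your proposal is correct and follows the paper's proof. Both arguments use the uniform estimate to bound $\widetilde P$ from below on $Q$ and from above on $\partial V$, rule out crossing $\partial V$ at every level in $(\alpha_{\rm out}+\varepsilon,\alpha_{\rm in}-\varepsilon]$, and read off the lifetime as the length of that interval.

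Your Step~3 is more careful than the paper. The paper equates lifetime with this isolation interval and never mentions the elder rule, even though a lower peak near $Q$ can die early by merging with a higher peak inside $V$. Attaching the bound to the oldest class of the component containing $Q$ just above level $\alpha_{\rm out}+\varepsilon$, as you do, is the reading under which the claim holds.

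One small correction: the hypothesis on the component of $\Omega_{\alpha_{\rm in}}(P)$ containing $Q$ is not just for consistency. By Lemma~\ref{lem:superlevel-inclusion} that connected set lies in $\Omega_{\alpha_{\rm in}-\varepsilon}(\widetilde P)$, and this is what makes ``the perturbed component carrying $Q$'' a single, well-defined component.
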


\begin{proof}
By Lemma~\ref{lem:superlevel-inclusion},
\[
    Q\subseteq\Omega_{\alpha_{\rm in}-\varepsilon}(\widetilde P).
\]
For \(\lambda\in\partial V\), one has
\[
    \widetilde P(\lambda)
    \leq
    P(\lambda)+\varepsilon
    \leq
    \alpha_{\rm out}+\varepsilon .
\]
Therefore, at every level larger than \(\alpha_{\rm out}+\varepsilon\), no
component of the superlevel set of \(\widetilde P\) that intersects \(Q\) can
cross \(\partial V\).  Such a component remains isolated from the exterior over
the level interval
\[
    (\alpha_{\rm out}+\varepsilon,\alpha_{\rm in}-\varepsilon].
\]
The length of this interval is
\[
    \alpha_{\rm in}-\alpha_{\rm out}-2\varepsilon ,
\]
which gives the stated lower bound.
\end{proof}

\begin{definition}[Persistent visible pole component]
\label{def:persistent-component}
A connected component \(\Ccal\) of the superlevel filtration of \(P_\rho\) is
called a persistent visible pole component if its lifetime satisfies
\[
    \ell(\Ccal)\geq\tau_{\rm life},
\]
its empirical contour margin satisfies
\[
    \operatorname{med}_{\lambda\in\Ccal_h}
    \mathfrak m_\rho^{\delta,M}(\lambda)
    \geq\tau_{\rm cont},
\]
and it is spatially disjoint from previously accepted components.
\end{definition}

The estimated number of persistent visible pole components is
\begin{equation}
    \widehat N_{\rm pers}
    =
    \#\{\Ccal:\Ccal\text{ is an accepted persistent visible pole component}\}.
    \label{eq:Npers}
\end{equation}
This number is a cluster count.  A component may represent one visible pole, or
a cluster of close poles that cannot be separated at the chosen contour radius
and noise level.

\subsection{Algorithmic implementation}
\label{subsec:algorithmic-implementation}

Algorithm~\ref{alg:indicator-field} summarizes the complete procedure.  The
fixed-threshold step is the primary extraction method; persistent homology is an
optional post-processing step for threshold robustness.

\begin{algorithm}
\caption{Contour-count indicator field for visible pole clusters}
\label{alg:indicator-field}
\begin{algorithmic}[1]
\Require Empirical coefficients \(a_k^{\delta,M}\), search radius \(R\),
testing family \(\Ical\), contour radius \(\rho\), grid
\(\Lambda_h\subset U_R^\rho\), contour-margin threshold \(\tau_{\rm cont}\),
superlevel threshold \(\alpha\), optional persistence threshold
\(\tau_{\rm life}\)
\For{\((n,L)\in\Ical\)}
    \State Construct \(\mathcal D_{n,L}^{\delta,M}\)
\EndFor
\For{\(\lambda\in\Lambda_h\)}
    \State Set \(\Gamma_\rho(\lambda)=\{\xi:|\xi-\lambda|=\rho\}\)
    \State Compute \(C_{n,L}^{\delta,M}(\Gamma_\rho(\lambda))\) for all
    \((n,L)\in\Ical\)
    \State Compute \(P_\rho^{\delta,M}(\lambda)\) by
    \eqref{eq:indicator-field}
    \State Compute \(\mathfrak m_\rho^{\delta,M}(\lambda)\) by
    \eqref{eq:margin-field}
\EndFor
\State Apply the margin filter \eqref{eq:margin-filtered-indicator}, if desired
\State Extract connected components of
\(\{\lambda\in\Lambda_h:P_\rho(\lambda)\geq\alpha\}\)
\State Estimate component centers by \eqref{eq:discrete-center}
\State Optionally compute \(H_0\) persistence of the superlevel filtration and
retain components satisfying Definition~\ref{def:persistent-component}
\State Return visible component centers \(\widehat\lambda(\Ccal)\) and physical
pole estimates \(\widehat p(\Ccal)=1/\widehat\lambda(\Ccal)\)
\end{algorithmic}
\end{algorithm}

The parameters have separate roles.  The contour radius \(\rho\) controls
spatial resolution.  Small \(\rho\) gives sharper but potentially fragmented
responses, while large \(\rho\) smooths the field but may merge close poles.
The threshold \(\alpha\) selects high-response visible clusters.  The margin
threshold \(\tau_{\rm cont}\) removes unstable contours.  The persistence
threshold \(\tau_{\rm life}\), when used, removes components that are
short-lived across threshold levels.

\begin{remark}[Scope and limitations]
\label{rem:scope-limitations}
The method is a visible-cluster imaging and certification procedure.  It does
not guarantee recovery of every formal pole in the meromorphic model.  The
output is a set of fixed-threshold or persistent visible components in the
reciprocal plane.  A component may correspond to one pole, to several close
poles that are not separated at the chosen radius, or to a feature that is
visible only over a limited threshold range.

The main mechanisms that reduce visibility are small residues, large physical
pole modulus, proximity to the boundary of \(U_R\), close reciprocal poles,
holomorphic background terms comparable to the pole contribution, and noise
that changes local winding counts.  These are not limitations of the
topological post-processing layer alone; they reflect the underlying stability
balance between determinant perturbation and contour margin.
\end{remark}

\section{Numerical experiments}
\label{sec:numerics}

This section illustrates the contour-count indicator field and its use for
visible-pole cluster extraction.  The experiments are designed to test five
questions: whether \(P_\rho\) behaves as an imaging functional, whether fixed
thresholding can extract visible pole clusters, how the result depends on the
contour radius and threshold, what persistent post-processing contributes, and
how the method behaves in visibility-limited configurations.

Unless otherwise stated, the reciprocal search region is
\[
    U_R=\{\lambda\in\mathbb C:1/R<|\lambda|<1\},
\]
and the indicator field is evaluated on a uniform Cartesian grid restricted to
the shrunk annulus \(U_R^\rho\).  The synthetic coefficient sequence has the
form
\[
    a_k
    =
    h_k+\sum_{j=1}^{N}c_j\lambda_j^k,
    \qquad k=0,1,\ldots ,
\]
with three reciprocal poles
\[
    \lambda_1=0.8,\qquad
    \lambda_2=0.620998-0.299976\,\ii,\qquad
    \lambda_3=0.501485+0.307463\,\ii .
\]
A holomorphic background and small complex coefficient noise are included.  The
testing family consists of several determinant orders and shifts, and
\(P_\rho\) is computed from the fraction of local contours for which the
argument-principle count equals one.

For localization accuracy, when the estimated and true numbers of visible
clusters agree, we use the maximum matching error
\[
    e_{\max}
    =
    \min_{\pi\in\mathfrak S_N}
    \max_{1\leq j\leq N}
    |\widehat\lambda_{\pi(j)}-\lambda_j|,
\]
where \(\mathfrak S_N\) denotes the set of permutations of
\(\{1,\ldots,N\}\).  When the numbers differ, we report the recovered
components and interpret the result as a visibility-limited cluster count.

\subsection{Indicator-field behavior}
\label{subsec:numerics-indicator}

Figure~\ref{fig:indicator-field} shows the contour-count indicator field
\(P_\rho(\lambda)\) with \(\rho=0.035\).  The field is concentrated near the
three true reciprocal poles, while most of the search annulus has value close
to zero.  This confirms that the one-zero contour decisions, after averaging
over determinant orders and shifts, produce a useful imaging functional on the
reciprocal pole plane.

The high-response sets are not point masses.  This is expected: a local circle
centered at \(\lambda\) gives a one-zero response whenever the disk
\(B(\lambda,\rho)\) contains one empirical determinant zero.  Thus each pole
generates a spatially extended response region whose width is controlled by
\(\rho\) and by the spread of the empirical determinant zeros.

\begin{figure}[t]
    \centering
    \includegraphics[width=0.72\textwidth]{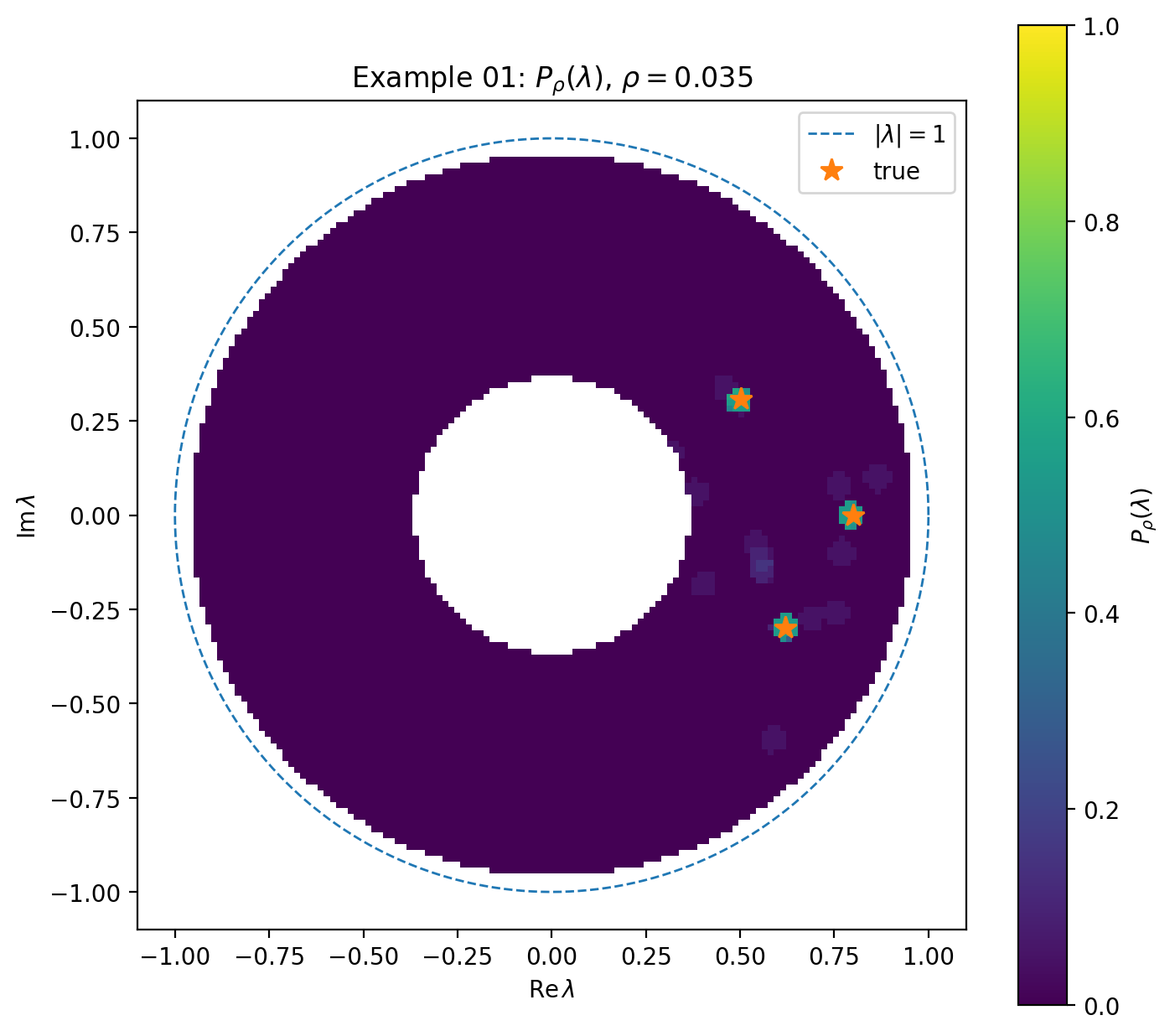}
    \caption{Contour-count indicator field \(P_\rho(\lambda)\) with
    \(\rho=0.035\).  The orange stars mark the true reciprocal poles.  The
    high-value regions concentrate near the true poles, while the background
    response is close to zero.}
    \label{fig:indicator-field}
\end{figure}

\subsection{Fixed-threshold cluster extraction}
\label{subsec:numerics-fixed-threshold}

We next apply the fixed-threshold extraction rule
\[
    \Omega_{\rho,\alpha}
    =
    \{\lambda\in U_R^\rho:P_\rho(\lambda)\geq\alpha\}.
\]
For \(\rho=0.035\) and \(\alpha=0.5\), the method extracts three connected
components.  Their weighted centers are listed in
Table~\ref{tab:fixed-threshold-components}.  The recovered centers are close to
the three true reciprocal poles, with nearest-pole errors between
\(5.08\times10^{-4}\) and \(4.20\times10^{-3}\).

\begin{table}[t]
\centering
\caption{Fixed-threshold components for \(\rho=0.035\) and
\(\alpha=0.5\).}
\label{tab:fixed-threshold-components}
\begin{tabular}{cccccc}
\toprule
Rank & Size & Max \(P_\rho\) & Center \(\widehat\lambda\) &
Median margin & Nearest error \\
\midrule
1 & 14 & 0.60 &
\(0.501763+0.308801\,\ii\) &
\(5.94\times10^{-9}\) &
\(1.37\times10^{-3}\) \\
2 & 15 & 0.55 &
\(0.624934-0.298516\,\ii\) &
\(4.49\times10^{-9}\) &
\(4.20\times10^{-3}\) \\
3 & 16 & 0.55 &
\(0.799492-0.000000\,\ii\) &
\(3.41\times10^{-9}\) &
\(5.08\times10^{-4}\) \\
\bottomrule
\end{tabular}
\end{table}

Figure~\ref{fig:threshold-clusters} displays the same indicator field together
with the extracted component centers.  The green circles overlap the true pole
locations, showing that the fixed-threshold rule already gives reliable
visible-pole localization in this example.

\begin{figure}[t]
    \centering
    \includegraphics[width=0.72\textwidth]{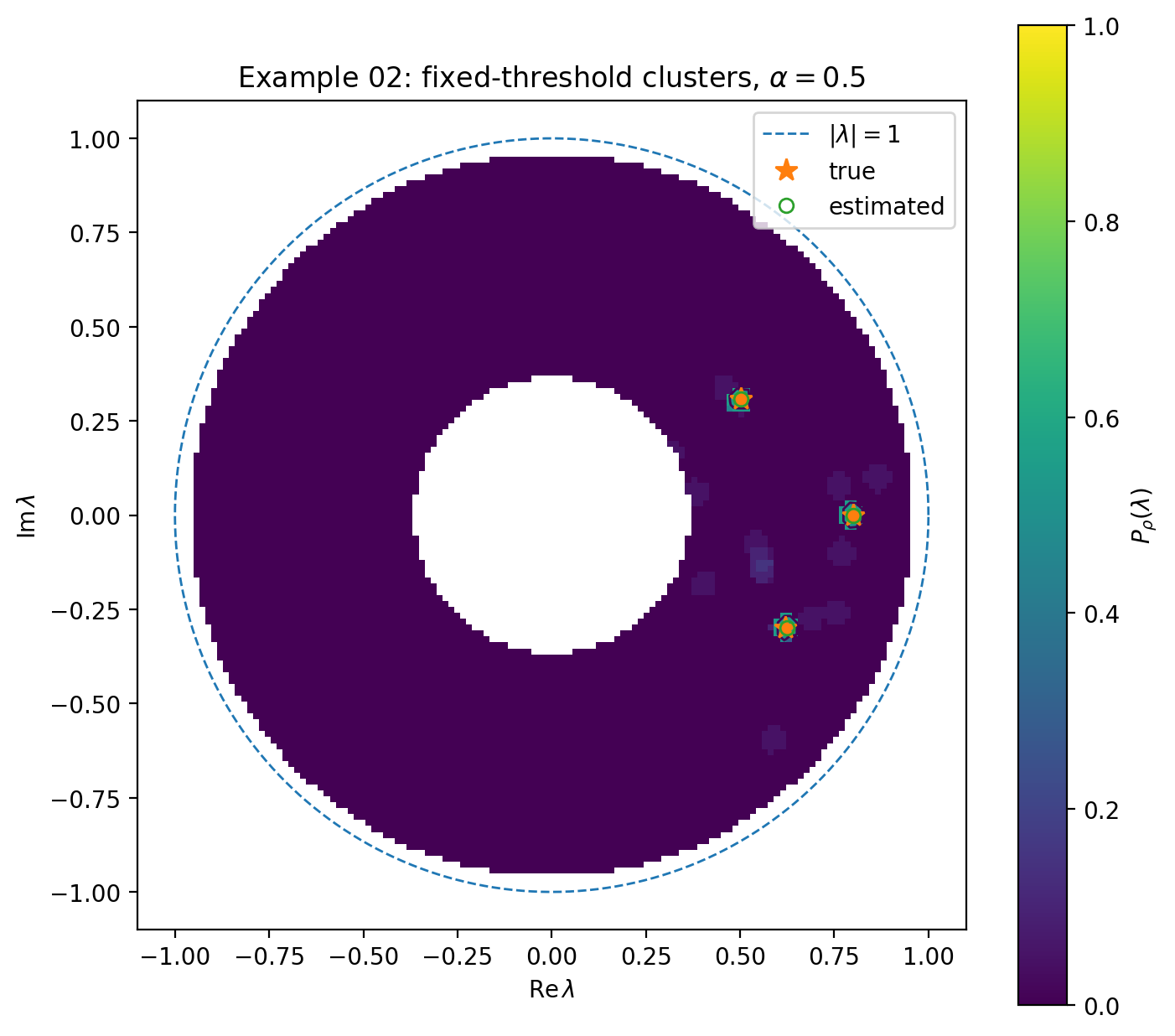}
    \caption{Fixed-threshold visible-pole clusters with \(\rho=0.035\) and
    \(\alpha=0.5\).  The orange stars are the true reciprocal poles and the
    green circles are the extracted weighted component centers.}
    \label{fig:threshold-clusters}
\end{figure}

\subsection{Dependence on the contour radius and threshold}
\label{subsec:numerics-rho-alpha}

The contour radius \(\rho\) controls the spatial scale of the indicator
response.  A small radius gives sharper responses but may fragment the field;
a larger radius gives smoother responses but may merge nearby features.
Figure~\ref{fig:rho-panel} shows \(P_\rho\) for
\[
    \rho=0.020,\ 0.035,\ 0.050,\ 0.070 .
\]
The visible components remain centered near the true reciprocal poles over this
range of radii, while the response regions broaden as \(\rho\) increases.

\begin{figure}[t]
    \centering
    \includegraphics[width=\textwidth]{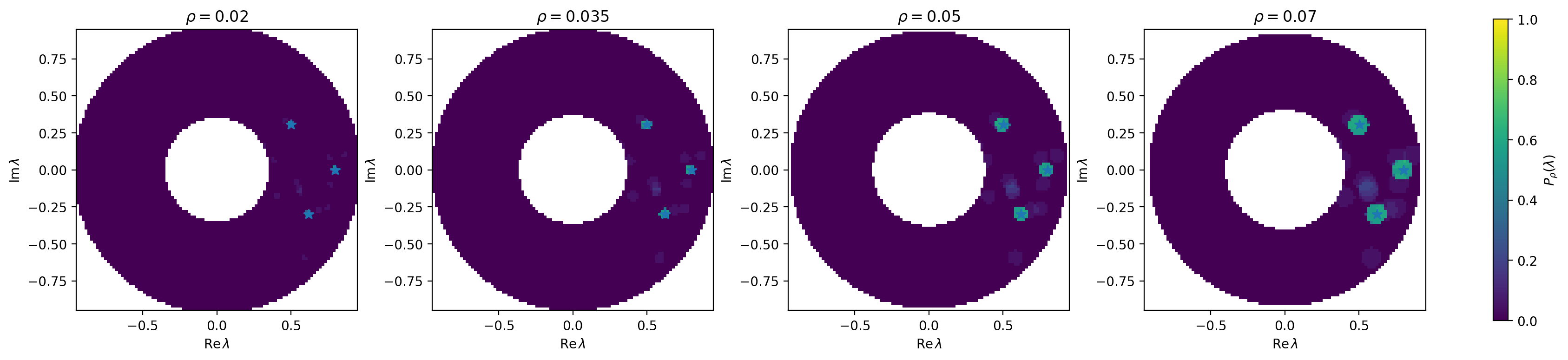}
    \caption{Indicator fields for different contour radii.  The pole responses
    remain stable over a range of \(\rho\), while larger radii produce broader
    high-response regions.}
    \label{fig:rho-panel}
\end{figure}

Table~\ref{tab:rho-alpha-sweep} reports the number of extracted components and
the matching error for different combinations of \(\rho\) and threshold
\(\alpha\).  For \(\alpha=0.35\) and \(\alpha=0.50\), the method extracts the
correct number of visible components for all tested radii.  The matching error
remains on the order of \(10^{-3}\).  For \(\alpha=0.65\), no component is
extracted because the threshold is above the observed peak level of the field
in most cases.  This illustrates why the threshold must be chosen relative to
the attainable indicator height.

\begin{table}[t]
\centering
\caption{Effect of contour radius \(\rho\) and threshold \(\alpha\).}
\label{tab:rho-alpha-sweep}
\begin{tabular}{ccccc}
\toprule
\(\rho\) & \(\alpha\) & No. components & \(e_{\max}\) & Max \(P_\rho\) \\
\midrule
0.020 & 0.35 & 3 & \(2.976\times10^{-3}\) & 0.55 \\
0.020 & 0.50 & 3 & \(2.976\times10^{-3}\) & 0.55 \\
0.020 & 0.65 & 0 & -- & 0.55 \\
0.035 & 0.35 & 3 & \(3.100\times10^{-3}\) & 0.60 \\
0.035 & 0.50 & 3 & \(4.745\times10^{-3}\) & 0.60 \\
0.035 & 0.65 & 0 & -- & 0.60 \\
0.050 & 0.35 & 3 & \(5.174\times10^{-3}\) & 0.60 \\
0.050 & 0.50 & 3 & \(5.174\times10^{-3}\) & 0.60 \\
0.050 & 0.65 & 0 & -- & 0.60 \\
0.070 & 0.35 & 3 & \(4.075\times10^{-3}\) & 0.65 \\
0.070 & 0.50 & 3 & \(4.302\times10^{-3}\) & 0.65 \\
0.070 & 0.65 & 0 & -- & 0.65 \\
\bottomrule
\end{tabular}
\end{table}

These results support the interpretation of \(P_\rho\) as a sampling-type
indicator.  The field gives stable visible-pole information over a moderate
range of contour radii and thresholds, but an overly high threshold may remove
all components.

\subsection{Persistent post-processing}
\label{subsec:numerics-persistence}

We then apply zero-dimensional persistence to the superlevel filtration of
\(P_\rho\).  This step is used as a post-processing tool, not as a replacement
for fixed-threshold extraction.  The corresponding persistence diagram is shown
in Figure~\ref{fig:persistence-diagram}.  The three dominant components have
birth levels approximately
\[
    0.60,\qquad 0.55,\qquad 0.55,
\]
and persist until the lowest background level in the discrete filtration.  In
contrast, many small components lie close to the diagonal or have short
lifetimes, reflecting low-level spurious responses.

\begin{figure}[t]
    \centering
    \includegraphics[width=0.62\textwidth]{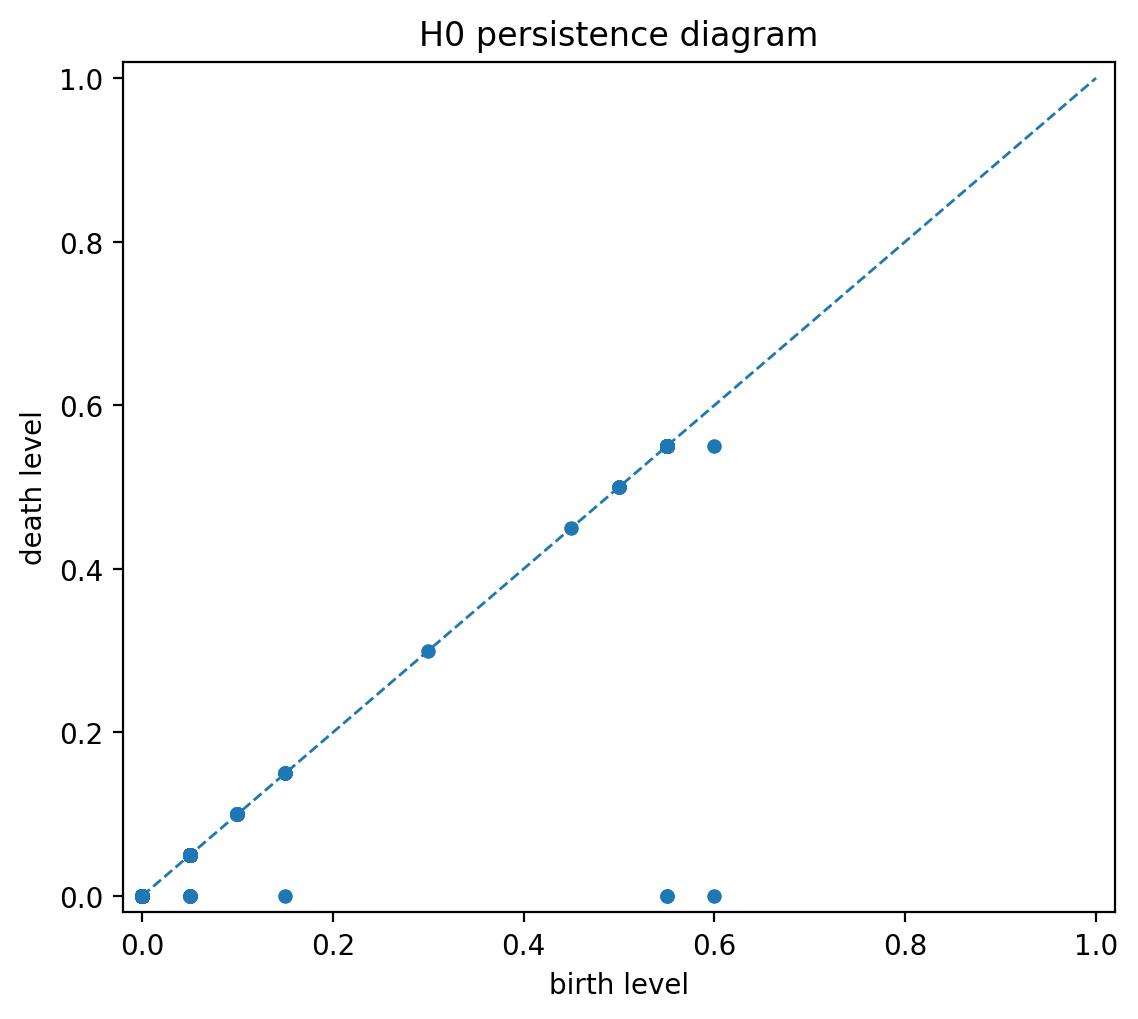}
    \caption{\(H_0\) persistence diagram of the superlevel filtration of
    \(P_\rho\).  The long-lived components correspond to the dominant visible
    pole clusters, while low-lifetime points represent short-lived fluctuations
    or low-level responses.}
    \label{fig:persistence-diagram}
\end{figure}

This experiment clarifies the role of persistent homology in the proposed
framework.  Fixed thresholding is sufficient when the high-response components
are clearly separated from the background.  Persistence becomes useful when one
wants to reduce dependence on a particular value of \(\alpha\) and retain only
components that survive over a range of thresholds.

\subsection{Visibility limitations}
\label{subsec:numerics-limitations}

Finally, we test three configurations where pole visibility is expected to
degrade.  The results are shown in Figure~\ref{fig:visibility-limitations} and
reported in Table~\ref{tab:visibility-limitations}.

\begin{figure}[t]
    \centering
    \includegraphics[width=\textwidth]{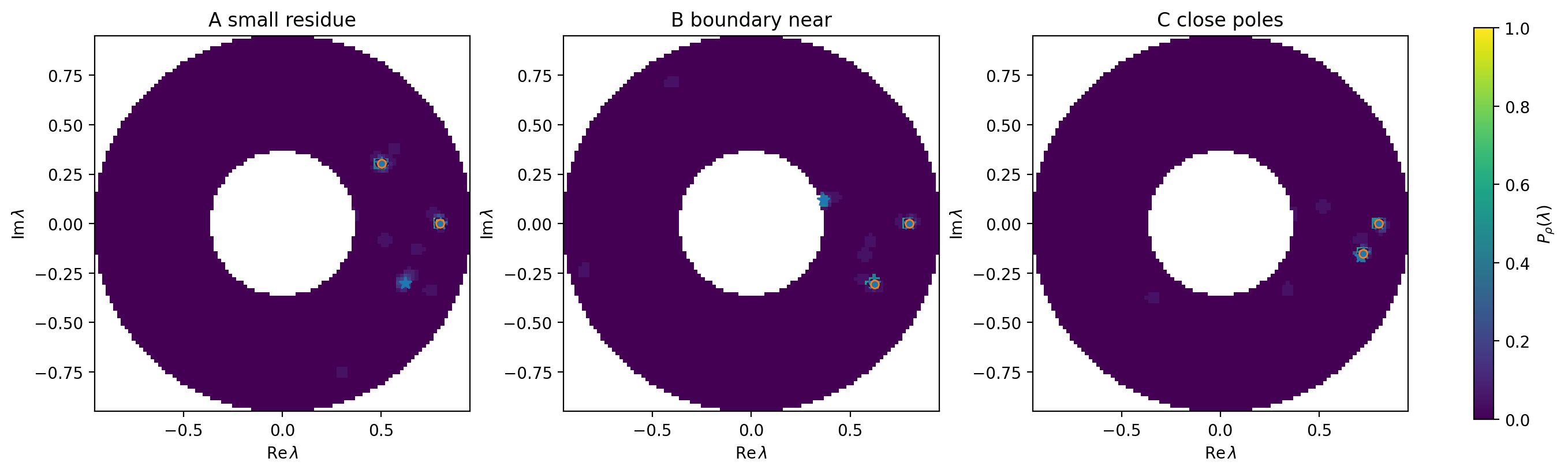}
    \caption{Visibility-limited examples.  Left: one pole has a small residue.
    Middle: one reciprocal pole is close to the boundary of the search annulus.
    Right: two reciprocal poles are close and merge into one visible cluster at
    the chosen scale.}
    \label{fig:visibility-limitations}
\end{figure}

\begin{table}[t]
\centering
\caption{Visibility-limited examples.  The method returns visible clusters,
not necessarily all formal poles.}
\label{tab:visibility-limitations}
\begin{tabular}{cccc}
\toprule
Case & No. components & Max \(P_\rho\) & Component centers \\
\midrule
Small residue & 2 & 0.75 &
\(0.798000+0.000934\,\ii;\ 0.501238+0.305448\,\ii\) \\
Boundary-near pole & 2 & 0.65 &
\(0.798178+0.000355\,\ii;\ 0.623301-0.304841\,\ii\) \\
Close poles & 2 & 0.65 &
\(0.722673-0.151664\,\ii;\ 0.799462-0.000731\,\ii\) \\
\bottomrule
\end{tabular}
\end{table}

In the small-residue case, the weak pole does not produce a sufficiently strong
high-response component at the chosen threshold.  In the boundary-near case,
the pole close to \(\partial U_R\) is difficult to certify because admissible
contours are geometrically restricted and the determinant margin is reduced.
In the close-pole case, two nearby reciprocal poles merge into a single visible
component.  These outcomes are consistent with the visible-cluster
interpretation of the method: the algorithm reports stable clusters supported
by the available contour-count evidence, rather than guaranteeing recovery of
every formal pole.

Overall, the experiments support the proposed workflow.  The contour-count
indicator field localizes visible reciprocal poles, fixed-threshold
superlevel components provide a direct extraction method, persistent homology
serves as a threshold-robust post-processing tool, and the observed failures
match the predicted visibility mechanisms.

\section{Conclusions}
\label{sec:conclusions}

We introduced a contour-count indicator field for visible pole clusters in outward meromorphic continuation.  The field aggregates local argument-principle zero counts over determinant orders and shifts and can be interpreted as a sampling-type imaging functional in the reciprocal pole plane.  Fixed superlevel sets provide direct visible-cluster extraction, while zero-dimensional persistent homology supplies an optional threshold-robust post-processing layer.

The analysis shows how pure-pole determinant factorization, one-zero contour geometry, Rouch\'e stability, superlevel-set separation, and persistence-gap stability combine to explain the observed behavior of the indicator field.  Strong isolated poles generate robust high-value components, while weak, close, boundary-near, or noise-dominated poles may produce low, short-lived, or merged responses.

Future work includes adaptive multiscale contour radii, sharper quantitative links between determinant margins and indicator-field contrast, and systematic comparisons with Hankel-pencil, Prony, Pad\'e, AAA, and inverse-scattering-inspired sampling indicators.

\bibliography{sn-bibliography}

@article{YangDeng2026,
	author = {X. Yang and Z. Deng},
	journal = {https://arxiv.org/abs/2607.04568},
	title = {Determinant characteristics and argument-principle certification for visible poles in meromorphic continuation},
	year = {2026}}

@article{Ito2012,
	author = {K. Ito and B. Jin and J. Zou},
	journal = {Inverse Problems},
	number = {2},
	pages = {025003},
	title = {A direct sampling method to an inverse medium scattering problem},
	volume = {28},
	year = {2012}}

@article{LiuSun2018,
	author = {J. Liu and J. Sun},
	journal = {Inverse Problems},
	number = {8},
	pages = {085007},
	title = {Extended sampling method in inverse scattering},
	volume = {34},
	year = {2018}}

@article{LiZou2013,
	author = {J. Z. Li and J. Zou},
	journal = {Inverse Problems and Imaging},
	number = {3},
	pages = {757-775},
	title = {A direct sampling method for inverse scattering using far-field data},
	volume = {7},
	year = {2013}}

@book{CakoniColtonMonk2011,
	publisher = {SIAM},
	title = {The Linear Sampling Method in Inverse Electromagnetic Scattering},
	year = {2011}}

@article{Potts2013,
	author = {D. Potts and M. Tasche},
	journal = {Electronic Transactions on Numerical Analysis},
	pages = {204-224},
	title = {Parameter estimation for multivariate exponential sums},
	volume = {40},
	year = {2013}}

@article{Sarkar1995,
	author = {T. K. Sarkar and O. Pereira},
	journal = {IEEE Antennas and Propagation Magazine},
	number = {1},
	pages = {48-55},
	title = {Using the matrix pencil method to estimate the parameters of a sum of complex exponentials},
	volume = {37},
	year = {1995}}

@book{Lavrentiev1967,
	address = {New York},
	author = {M. M. Lavrentiev},
	editor = {Translation revised by R. J. Sacker},
	publisher = {Springer-Verlag},
	title = {Some Improperly Posed Problems of Mathematical Physics},
	year = {1967}}

@article{Ying2022_b,
	author = {L. Ying},
	journal = {Journal of Scientific Computing},
	number = {3},
	pages = {107},
	title = {Pole recovery from noisy data on imaginary axis},
	volume = {92},
	year = {2022}}

@article{Deng2026_c,
	author = {Z. Deng and X. Guan and X. Yang},
	journal = {https://arxiv.org/abs/2606.29678},
	title = {Fourier--{Hankel} moment methods for topological counting and phase-center recovery in acoustic inverse scattering},
	year = {2026}}

@article{Deng2026_b,
	author = {Z. Deng and X. Yang and A. Qian},
	journal = {https://arxiv.org/pdf/2606.21815},
	title = {A moment-{Hankel} rank method for identifying the number of point sources in the heat equation},
	year = {2026}}

@article{Deng2026_a,
	author = {Z. Deng and A. Qian and X. Yang},
	journal = {https://arxiv.org/pdf/2606.15065},
	title = {A {Hankel} determinant zero-order principle for source conunting in an inverse heat point-source problem},
	year = {2026}}

@article{Ying2022,
	author = {L. Ying},
	journal = {Journal of Computational Physics},
	pages = {111549},
	title = {Analytic continuation from limited noisy {Matsubara} data},
	volume = {469},
	year = {2022}}

@article{Bisshopp1983,
	author = {F. Bisshopp},
	journal = {Quarterly of Applied Mathematics},
	number = {1},
	pages = {125-142},
	title = {Numerical conformal mapping and analytic continuation},
	volume = {41},
	year = {1983}}

@article{Stefanescu1980,
	author = {I. S. Stefanescu},
	journal = {Journal of Mathematical Physics},
	number = {1},
	pages = {175-188},
	title = {On the stable analytics continuation with rational functions},
	volume = {21},
	year = {1980}}

@article{Reichel1986,
	author = {L. Reichel},
	journal = {Constructive Approximation},
	pages = {23-39},
	title = {Numerical methods for analytic continuation and mesh generation},
	volume = {2},
	year = {1986}}

@article{Fu2009,
	author = {C. Fu and Z. Deng and X. Feng and F. Dou},
	journal = {SIAM Journal on Numerical Analysis},
	number = {4},
	pages = {2982-3000},
	title = {A modified {Tikhonov} regularization for stable analytic continuation},
	volume = {47},
	year = {2009}}

@article{DereviankoHuebner2025,
	author = {N. Derevianko and Lennart A. H{\"u}bner},
	journal = {https://arxiv.org/abs/2504.19157},
	title = {Parameter estimation for multivariate exponential sums via iterative rational approximation},
	year = {2025}}

@article{DereviankoPlonkaPetz2021,
	author = {Nadiia Derevianko and Gerlind Plonka and Markus Petz},
	journal = {https://arxiv.org/abs/2106.15140},
	title = {From ESPRIT to ESPIRA: Estimation of Signal Parameters by Iterative Rational Approximation},
	year = {2021}}

@article{DereviankoPlonka2022,
	author = {N. Derevianko and G. Plonka},
	journal = {Analysis and Applications},
	number = {3},
	pages = {543-577},
	title = {Exact reconstruction of extended exponential sums using rational approximation of their {Fourier} coefficients},
	volume = {20},
	year = {2022}}

@article{Derevianko2025,
	author = {N. Derevianko},
	journal = {Analysis and Appliations},
	title = {Recovery of rational functions via {Hankel} pencil method and sensitivities of the poles},
	year = {2025}}

@article{Henrici1966,
	author = {P. Henrici},
	journal = {Journal on Numerical Analysis},
	number = {1},
	pages = {67-78},
	title = {An algorithm for analytic continuation},
	volume = {3},
	year = {1966}}

@article{Cannon1965,
	author = {J. Cannon and K. Miller},
	journal = {Journal of the Society for Industrial and Applied Mathematics: Series B, Numerical Analysis},
	number = {1},
	pages = {87-98},
	title = {Some problems in numerical analytic continuation},
	volume = {2},
	year = {1965}}

@article{Miller1970,
	author = {K. Miller},
	journal = {SIAM Journal on Applied Mathematics},
	number = {2},
	pages = {346-363},
	title = {Stabilized numerical analytic prolongation with poles},
	volume = {18},
	year = {1970}}

@article{Trefethen2023,
	author = {L. N. Trefethen},
	journal = {Japan Journal of Industrial and Applied Mathematics},
	pages = {1587-1636},
	title = {Numerical analytic continuation},
	volume = {40},
	year = {2023}}

@article{Trefethen2020,
	author = {L. N. Trefethen},
	journal = {BIT Numerical Mathematics},
	pages = {901-915},
	title = {Quantifying the ill-conditioning of analytic continuation},
	volume = {60},
	year = {2020}}

\end{document}